\pdfoutput=1
\RequirePackage{ifpdf}
\ifpdf 
\documentclass[pdftex]{sigma}
\else
\documentclass{sigma}
\fi

\numberwithin{equation}{section}

\newtheorem{Theorem}{Theorem}[section]
\newtheorem{Lemma}[Theorem]{Lemma}
\newtheorem{Proposition}[Theorem]{Proposition}
{ \theoremstyle{definition}
\newtheorem{Example}[Theorem]{Example} }

\begin{document}

\allowdisplaybreaks

\renewcommand{\thefootnote}{$\star$}

\newcommand{\arXivNumber}{1512.03898}

\renewcommand{\PaperNumber}{050}

\FirstPageHeading

\ShortArticleName{Automorphisms of Algebras and Bochner's Property for Vector Orthogonal Polynomials}

\ArticleName{Automorphisms of Algebras and Bochner's Property\\ for Vector Orthogonal Polynomials\footnote{This paper is a~contribution to the Special Issue
on Orthogonal Polynomials, Special Functions and Applications.
The full collection is available at \href{http://www.emis.de/journals/SIGMA/OPSFA2015.html}{http://www.emis.de/journals/SIGMA/OPSFA2015.html}}}

\Author{Emil HOROZOV~$^{\dag\ddag}$}

\AuthorNameForHeading{E.~Horozov}

\Address{$^\dag$~Department of Mathematics and Informatics, Sofia University,\\
\hphantom{$^\dag$}~5 J.~Bourchier Blvd., Sofia 1126, Bulgaria}
\Address{$^\dag$~Institute of Mathematics and Informatics,	Bulg. Acad. of Sci.,\\
\hphantom{$^\ddag$}~Acad. G.~Bonchev Str., Block 8, 1113 Sofia, Bulgaria}
\EmailD{\href{mailto:horozov@fmi.uni-sofia.bg}{horozov@fmi.uni-sofia.bg}}

\ArticleDates{Received January 26, 2016, in f\/inal form May 12, 2016; Published online May 19, 2016}

\Abstract{We construct new families of vector orthogonal polynomials that have the pro\-per\-ty to be eigenfunctions of some dif\/ferential operator. They are extensions of the Hermite and Laguerre polynomial systems. A third family, whose f\/irst member has been found by Y.~Ben Cheikh and K.~Douak is also constructed. The ideas behind our approach lie in the studies of bispectral operators. We exploit automorphisms of associative algebras which transform elementary vector orthogonal polynomial systems which are eigenfunctions of a~dif\/ferential operator into other systems of this type.}

\Keywords{vector orthogonal polynomials; f\/inite recurrence relations; bispectral problem; Bochner theorem}

\Classification{34L20; 30C15; 33E05}

\renewcommand{\thefootnote}{\arabic{footnote}}
\setcounter{footnote}{0}

\section{Introduction}\label{intro}

Salomon Bochner \cite{Bo} has classif\/ied all systems of orthogonal polynomials $P_n(x)$, $n=0, 1, \ldots $ (with respect to some measure on the real line) that are also eigenfunctions of a second-order dif\/ferential operator
\begin{gather} \label{BP1}
	L(x, \partial_x) = A(x)\partial_x^2 + B(x)\partial_x + C(x)
\end{gather}
with eigenvalues $\lambda(n)$.
Here the coef\/f\/icients $A$, $B$, $C$ of the dif\/ferential equation do not depend on the index (degree)~$n$ of the polynomial~$P_n(x)$.
The orthogonality condition, due to a classical theorem by Favard--Shohat is equivalent to the well known 3-terms recursion relation
\begin{gather*}
xP_n = P_{n+1} + \beta(n)P_n + \gamma(n) P_{n-1},
\end{gather*}
where $\beta(n), \gamma(n) > 0$ are constants, depending on $n$.\footnote{Here we use polynomials, normalized by the condition that the coef\/f\/icient of their highest-order term is~$1$.} Bochner's theorem states that all polynomial systems with such properties are the classical orthogonal polynomials of Hermite, Laguerre and Jacobi. From now on we will replace the condition of orthogonality with respect to a measure with orthogonality with respect to a non-degenerate functional on the space of the polynomials of one variable ${\mathbb{C}}[x]$ with complex coef\/f\/icients. In this setting we have to add to the classical orthogonal polynomials also the Bessel polynomials.

Generalizations of Bochner's result were suggested long ago by H.L.~Krall \cite{Kra}. He classif\/ied all order 4 dif\/ferential operators which have a family of orthogonal polynomials as eigenfunctions. Later many contributions in this direction were made by T.~Koornwinder~\cite{Koo}, L.~Littlejohn~\cite{Lit}, J.~Koekoek, R.~Koekoek~\cite{KK}, etc.

In recent times there is much activity in generalizations and versions of the classical result of Bochner, see, e.g., \cite{DdlI, GH1, GH2, Il1, Il2}.
An important role in some of these generalizations is played by ideas from the study of the bispectral problem, initiated in \cite{DG}. Even translating the results of Bochner and Krall into this language already gives a good basis to continue investigations~\cite{GH2}. It goes as follows. Let us introduce the function $\psi(x, n) = P_n(x)$. If we write the right-hand side of the 3-term recursion relation as a dif\/ference operator~$\Lambda(n)$ acting on functions $f(n)$ of the discrete variable~$n$ as
\begin{gather*}
\Lambda(n)f(n)= f(n+1) + \beta(n)f(n) +\gamma(n)f(n-1),
\end{gather*}
then the 3-term recursion relation can be written as
\begin{gather}
	x\psi(x, n) = \Lambda(n) \psi(x, n). \label{BP2a}
\end{gather}
This means that $\psi(x, n)$ is an eigenfunction of the discrete operator $\Lambda$ with eigenvalue $x$ and of a dif\/ferential operator $L(x, \partial_x)$ with eigenvalue~$\lambda_n$. Hence we can formulate the Bochner--Krall problem as

{\it Find all systems of polynomials $P_n(x)$ that are eigenfunctions of a second-order difference ope\-ra\-tor~$\Lambda(n)$ \eqref{BP2a}, where $\gamma(n) \neq 0$, with eigenvalue $x$ and of a differential operator $L(x, \partial_x)$}~\eqref{BP1} with eigenvalues~$\lambda(n)$.

In \cite{GHH, GY} and several other papers the authors make use of Darboux transformations to construct families of systems of orthogonal polynomials, that are eigenfunctions of even-order dif\/ferential operators. Some properties of these polynomial systems are dif\/ferent from the pro\-per\-ties of the classical orthogonal polynomials. For example P.~Iliev \cite{Il1, Il2} has studied the algebras of operators that have Bochner--Krall polynomials as eigenfunctions. These algebras have two or more generators, whereas the algebras corresponding to the classical OP are isomorphic to the algebra~${\mathbb{C}}[X]$.

Here we also use ideas found in the studies of bispectral operators but of dif\/ferent nature. Before explaining these as well as the main results let us introduce one more concept which is central for the present paper. This is the notion of vector orthogonal polynomials (VOP), introduced by J.~van Iseghem~\cite{VIs}. Let $\{P_n(x)\}$ be a family of monic polynomials such that $\deg P_n = n$. Assume that they satisfy a~$(d+2)$-term recursion relation, $d \geq 1$
\begin{gather}
xP_n(x) = P_{n+1} + \sum_{j=0}^{d}\gamma_j (n)P_{n-j}(x) \label{d-ort}
\end{gather}
with coef\/f\/icients $\gamma_j(n)$ independent of~$x$ and $\gamma_d(n) \neq 0$. Then by a theorem of P.~Maroni \cite{Ma} there exist~$d$ functionals $u_j$, $j =0,\ldots,d-1$ on the space of all polynomials ${\mathbb{C}}[x]$ such that
\begin{gather*}
u_k (P_nP_m) = 0, \qquad m > nd + k, \qquad n \geq 0, \\
u_k(P_nP_{nd+k}) \neq 0, \qquad n \geq 0,
\end{gather*}
for each $k \in N_{d+1} := \{0, \ldots, d-1 \}$. When $d = 1$ this is the notion of orthogonal polynomials.

In the last 20--30 years there has been much activity in the study of vector orthogonal polynomials and the broader class of multiple orthogonal polynomials (see the cited references below).

Applications of $d$-orthogonal polynomials include the simultaneous Pad\'e
approximation problem where the multiple orthogonal polynomials appear~\cite{Apt,ABVA, ApKu, AMR, DBr}. Multiple orthogonal polynomials play an important role in random matrix theory~\cite{ApKu, BDK}. Applications to integrable systems and quantum mechanics are found as well~\cite{Cha, DLMTC}.

One problem that deserves attention is to f\/ind analogs of classical orthogonal polynomials. Several authors~\cite{KR, VAC} have found multiple orthogonal polynomials, that share a number of properties with the classical orthogonal polynomials~-- they have raising and lowering operators, Rodrigues type formulas, Pearson equations for the weights, etc. However one of the features of the classical orthogonal polynomials~-- a~dif\/ferential operator for which the polynomials are eigenfunction, is missing. Sometimes this property is relaxed to the property that the polynomials satisfy linear dif\/ferential equation, whose coef\/f\/icients may \textit{depend} on the degree of the polynomial.

The ideas from bispectral theory that we use, allow us to to construct new VOP systems, having the strict property to be eigenfunctions of one dif\/ferential operator. This is even stronger than one of the research problems suggested in \cite{VAC}. We notice that Hahn's characterization, while valid for the polynomials of Hermite type (Appell polynomials) is not shared by the new VOP, constructed in Sections~\ref{section4} and~\ref{section5} (this was guessed in~\cite{VAC}). In these cases the relevant property is a generalization of Shef\/fer's one: There exist an operator $Q(x, \partial)$ such that
\begin{gather*}
Q(x, \partial) P_n(x) = \mu(n) P_{n-1}.
\end{gather*}
Recall that Shef\/fer's property corresponds to $\mu(n) = n$ and $Q = \partial$. But in the constructions in the present paper~$\mu(n)$ is a~polynomial of degree higher than one and $Q$ can be a higher-order dif\/ferential operator.

We are looking for polynomials $P_n(x)$, $n=0, 1, \ldots$ that are eigenfunctions of a dif\/ferential operator $L$ with eigenvalues depending on the discrete variable $n$ (the index) as in~\eqref{BP1} and which at the same time are eigenfunctions of a dif\/ference operator in $n$, i.e., f\/inite-term recursion relation~\eqref{d-ort} with an eigenfunction, depending only on the continuous variable $x$. This is a~$(d+2)$-order recursion relation with some $d \geq 1$. Systems with these properties will be called \textit{polynomial systems with Bochner's property.}

One of our results includes an extension of Laguerre polynomials. We construct systems of vector orthogonal polynomials $\{P_n(x)\}$ which are eigenfunctions of a dif\/ferential operator starting from a very simple bispectral situation. Our approach uses ideas of the bispectral theory from \cite{BHY} but does not use Darboux transformations, which is usually the case, see, e.g., \cite{GH3, GHH, GY, Il1, Il2}. We use methods introduced in~\cite{BHY} which we describe below. Also a well known extension of Hermite polynomials (see~\cite{App, Dou, Sh}) is presented. The reason to repeat this well known result is that {\it our construction is a new one}. Also the case of Hermite-like polynomials is the simplest and illustrates the idea without going to computations.

Here is a sketch of the method explained on the example of Hermite-like polynomials. We start with the Weyl algebra $W_1$, i.e., the algebra of dif\/ferential operators in one variable $x$ with polynomial coef\/f\/icients. Another algebra~${\mathcal R}$ is spanned by the shift operator $T$ in $n$, $Tf(n) = f(n+1)$, its inverse~$T^{-1}$ and~$n$. There is a very simple example of the bispectral problem with $\psi(x, n) = x^n$. It is a joint eigenfunction of the operators $H= x\partial_x$ and the shift operator~$T$. In more details
\begin{gather*}
H\psi(x,n) = n \psi(x,n), \qquad T \psi(x,n) = x \psi(x,n).
\end{gather*}
This def\/ines an anti-involution $b\colon W_1 \rightarrow {\mathcal R}$, $b(H) =n$, $b(T) = x$, $b(\partial)=nT^{-1}$. If we perform an automorphism of the Weyl algebra $\sigma\colon W_1 \rightarrow W_1$ we can obtain another anti-isomorphism $b' = b\circ \sigma^{-1}\colon W_1 \rightarrow {\mathcal R}$. This will def\/ine a new bispectral problem by taking $H' = b\circ \sigma(H)$ and $\Lambda = b\circ \sigma^{-1}(x)$. In the case described brief\/ly here we obtain Hermite polynomials from the simplest nontrivial automorphism $\sigma = e^{\operatorname{ad}_B}$ with $B =-\partial^2/2$. Using other automorphisms we obtain systems of vector orthogonal polynomials in general, which are known under the name Appell polynomial systems \cite{App, Dou, Sh}.

Instead of~$W_1$ we can use other algebras. For example we can take a suitable enveloping algebra of~${\mathfrak{sl}}_2$. This will give us Laguerre polynomials and some of their vector orthogonal generalizations. Using a dif\/ferent algebra in Section~\ref{section5}, we come to new classes of VOP. In fact its f\/irst member is also known and can be found in~\cite{BCD, BCO}.

In \cite{VZh} the authors use automorphisms of the Heisenberg--Weyl algebra to construct vector orthogonal polynomials of Hermite and Charlier type. A nice feature of their construction is that a connection to representation theory is underlined and heavily used. Our construction uses only automorphism of algebras as def\/ined in~\cite{BHY} and seems to be simpler and easier to apply elsewhere.

Here we keep the exposition as simple as possible. We do not study properties of the polynomial systems and leave this for future work.
The construction of the present paper can be performed in much more general situations of VOP, including their discrete versions, see~\cite{Ho}, matrix orthogonal polynomials, multivariable orthogonal polynomials, etc. This will be done elsewhere.

\section{Elements of bispectral theory}\label{section2}

The following introductory material is mainly borrowed from \cite{BHY}. Here we present it in a form suitable for the continuous-discrete version of the bispectral problem.

For $i=1,2$, let $\Omega_i$ be two open subsets of ${\mathbb{C}}$ such that $\Omega_2$ is invariant
under the translation operator
$T \colon n \mapsto n+1$ and its inverse $T^{-1}$.

A complex analytic dif\/ference operator on $\Omega_2$
is a f\/inite sum of the form
\begin{gather*}
\sum_{k \in {\mathbb Z}} c_k(n) T^k,
\end{gather*}
where $c_k \colon \Omega_1 \to {\mathbb{C}}$ are analytic functions.

By ${\mathcal B}_1$ we denote an algebra with unit, consisting of dif\/ferential operators $L(x, \partial)$ in one variable $x$. By
${\mathcal B}_2$ we denote an algebra of dif\/ference operators $\Lambda(n, T)$ with unit. Denote by~${\mathcal M}$ the space of complex analytic functions
on $\Omega_1 \times \Omega_2$. The space ${\mathcal M}$ is naturally equipped with the structure of bimodule
over the algebra of analytic dif\/ferential operators $L(x, \partial_x)$ on $\Omega_1$ and the dif\/ference operators on~$\Omega_2$.

Assume that there exists an algebra isomorphism $b \colon {\mathcal B}_1 \to {\mathcal B}_2$ and
an element $\psi \in {\mathcal M}$ such that
$P \psi = b(P)\psi$, $\forall\, P \in {\mathcal B}_1$.
Assume that $A_i$ and $K_i$ are subalgebras of ${\mathcal B}_i$ such that
$b(A_1) = K_2$ and $b(K_1) = A_2$.
In our case $K_1$ will be an algebra of polynomials in $x$ and~$K_2$ will be an algebra of polynomials in~$n$.

A continuous-discrete bispectral function is by def\/inition an element of~${\mathcal M}$ (i.e.,
an analytic function) $\psi \colon \Omega_1 \times \Omega_2 \to {\mathbb{C}}$
for which there exist analytic dif\/ferential operator
$L(x, \partial_x)$ on~$\Omega_1$, analytic dif\/ference opera\-tor~$\Lambda(n, T)$ on~$\Omega_2$, and analytic functions
$\theta(x)$ and $\lambda(n)$,
such that
\begin{gather}\label{bisp3}
		L(x, \partial_x) \psi(x,n) = \lambda(n) \psi(x,n), \qquad
		\Lambda(n, T_n) \psi(x, n) = \theta(x) \psi(x,n)
	\end{gather}
on $\Omega_1 \times \Omega_2$. In fact, as we would be interested in VOP, we will consider only the case when $\theta(x) \equiv x$. We will assume that $\psi(x,n)$ is a nonsplit function of~$x$ and~$n$ in the sense that it satisf\/ies the condition
\begin{enumerate}\itemsep=0pt
\item[$(**)$] there are no nonzero analytic dif\/ference operators $L(x, \partial_x)$ and $\Lambda(n, T)$
that satisfy one of the above conditions with $f(n) \equiv 0$ or $\theta(x) \equiv 0$.
\end{enumerate}

The assumption $(**)$ implies that the map $b \colon {\mathcal B}_1 \to {\mathcal B}_2$, given by $b(P(x, \partial_x)):= S(n, T)$ is a well def\/ined algebra anti-isomorphism. The algebra $A_1 := b^{-1}(K_2)$ consists of the bispectral operators corresponding to~$\psi(x,z)$ (i.e.,
dif\/ferential operators in $x$ having the properties~\eqref{bisp3}) and the algebra
$A_2 := b(K_1)$ consists of the bispectral operators corresponding to~$\psi(x,n)$ (i.e.,
dif\/ference operators in $n$ having the properties~\eqref{bisp3}).

Below we present the dif\/ferential-dif\/ference version of the general bispectral problem, suitable in the set-up of vector orthogonal polynomial sequences, which are eigenfunctions of dif\/ferential operators.
We are interested in the case when, for any f\/ixed~$n$, the function $\psi(x,n)$ def\/ining the map~$b$ is a polynomial in $x$. We additionally assume that, for all~$n$, polynomials $\psi(x,n)$ are the eigenfunctions of a f\/ixed dif\/ferential operator in the variable~$x$ and that, for any f\/ixed~$x$, the function $\psi(x,n)$ is an eigenfunction of a~dif\/ference operator in~$n$. We know that such a situation occurs in case of the classical orthogonal polynomials.

Let ${\mathcal B}_1$ be the Weyl algebra $W_1$ (spanned over~${\mathbb{C}}$ by~$x$ and~$\partial$). To def\/ine another algebra, we introduce three operators: the shift operator $T$ acting on functions~$f(n)$ of the discrete variable $n$ by $Tf(n) := f(n+1)$, its inverse~$T^{-1}$ shifting in the opposite direction, i.e., $T^{-1}f(n) : = f(n-1)$, and the operator~$n$ of multiplication by the variable $n$. We def\/ine the algebra~${\mathcal R}_2$ of dif\/ference operators over ${\mathbb{C}}$, spanned by $T$, $T^{-1}$ and $n$ with the obvious relations. Let ${\mathcal M}$ be a left module over~${\mathcal B}_1$ and~${\mathcal R}_2$. In our case such a module is a linear space of bivariate analytic func\-tions~$f(x,n),$ where $x$ is a~continuous and $n$ is a discrete variables.

Set $L := x\partial$. Introducing $\psi(x,n): = S_n(x):=x^n$, we get
\begin{gather}
	L S_n(x) = n S_n(x), \label{diff-eq}\\
	x S_n(x) = S_{n+1}(x), \label{rec-rel} \\
	\partial S_n(x) = n S_{n-1}(x). \label{low-op}
\end{gather}

We see that \eqref{diff-eq} and \eqref{rec-rel} are providing an instance of a dif\/ferential-dif\/ference bispectral pair of operators. The equation \eqref{diff-eq} guarantees that $\psi(x,n)$ is an eigenfunction of the dif\/ferential operator~$L$, and~\eqref{rec-rel} shows that the same function is an eigenfunction of the dif\/ference ope\-ra\-tor~$T$.
Equation~\eqref{low-op} follows from the previous two; it is called the dif\/ferentiation (or the lowering) formula and is quite important.

We can def\/ine the map $b\colon {\mathcal B}_1 \rightarrow {\mathcal R}_2$ as given by
\begin{gather*}
b(\partial) = nT^{-1}, \qquad b(x) = T, \qquad b(L) = n.
\end{gather*}
(It is clear that the f\/irst two identities are suf\/f\/icient to determine $b$ completely. The last identity is included because of its importance and convenience.) Then we put ${\mathcal B}_2 = b({\mathcal B}_1)$.

Next we will discuss how to construct new bispectral operators from already known ones. First recall that for an operator~$L$ it is said that $\operatorname{ad}_L$ acts locally nilpotently when for any element $a\in {\mathcal B}$ there exists $k\in {\mathbb N}$, such that
$\operatorname{ad}^k_L(a)= 0$.

The main tool for constructing bispectral operators in this paper will be the following simple observation made in~\cite{BHY}.

\begin{Proposition}\label{2.1}\sloppy
 Let ${\mathcal B}_1$, ${\mathcal B}_2$ be unital algebras with the properties described above and \mbox{$b \colon {\mathcal B}_1 \to {\mathcal B}_2$}.
Let $L \in {\mathcal B}_1$ and $\operatorname{ad}_L \colon {\mathcal B}_1 \rightarrow {\mathcal B}_1$ be a locally nilpotent operator. Suppose that, for any fixed~$n$, $e^L\psi(x,n)$ is a polynomial in~$x$ of degree~$n$. Define a new map $b' \colon {\mathcal B}_1 \rightarrow {\mathcal B}_2$ via the new polynomial function $\psi'(x,n) := e^L \psi(x,n)$.
Then $b' = b(e^{-\operatorname{ad}_L})$ and $b'\colon {\mathcal B}_1\to {\mathcal B}_2$ is a~bispectral anti-involution.
\end{Proposition}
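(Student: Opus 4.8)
The plan is to verify directly that the defining bispectral relation $P\psi' = b'(P)\psi'$ forces $b'(P) = b\big(e^{-\operatorname{ad}_L}(P)\big)$, and then to read off the anti-involution property from the fact that $b$ is one and $e^{-\operatorname{ad}_L}$ is an automorphism. First I would record that $\operatorname{ad}_L$ is a derivation of $\mathcal{B}_1$, so its exponential is an algebra homomorphism by the Leibniz rule $\operatorname{ad}_L(PQ)=\operatorname{ad}_L(P)Q+P\operatorname{ad}_L(Q)$. Local nilpotence is exactly what makes $e^{\pm\operatorname{ad}_L}=\sum_{k\ge 0}(\pm\operatorname{ad}_L)^{k}/k!$ a finite sum on every element, hence well defined, with the two series mutually inverse; therefore $\sigma:=e^{-\operatorname{ad}_L}$ is an automorphism of $\mathcal{B}_1$.

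Next comes the main computation. For $P\in\mathcal{B}_1$ I would use the standard conjugation identity $e^{-L}Pe^{L}=e^{-\operatorname{ad}_L}(P)$, equivalently $Pe^{L}=e^{L}\,e^{-\operatorname{ad}_L}(P)$, so that
\[
P\psi' = Pe^{L}\psi = e^{L}\,e^{-\operatorname{ad}_L}(P)\,\psi .
\]
Since $e^{-\operatorname{ad}_L}(P)\in\mathcal{B}_1$ is a finite sum and $\psi$ is bispectral for $b$, the relation $Q\psi=b(Q)\psi$ applies with $Q=e^{-\operatorname{ad}_L}(P)$, giving $P\psi'=e^{L}\,b\big(e^{-\operatorname{ad}_L}(P)\big)\,\psi$. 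Now $e^{L}$ is a differential operator in $x$ whereas $b\big(e^{-\operatorname{ad}_L}(P)\big)\in\mathcal{B}_2$ is a difference operator in $n$; on the bimodule $\mathcal{M}$ these commute, so I may move $e^{L}$ to the right:
\[
P\psi' = b\big(e^{-\operatorname{ad}_L}(P)\big)\,e^{L}\psi = b\big(e^{-\operatorname{ad}_L}(P)\big)\,\psi' .
\]
By the definition of $b'$ this reads $b'(P)\psi'$, and hence $b'(P)=b\big(e^{-\operatorname{ad}_L}(P)\big)$, that is $b'=b\circ e^{-\operatorname{ad}_L}$.

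For the anti-involution claim, being the composite of the anti-isomorphism $b$ with the automorphism $e^{-\operatorname{ad}_L}$, the map $b'$ is itself an anti-isomorphism $\mathcal{B}_1\to\mathcal{B}_2$. To see it is a genuine bispectral anti-involution I would exhibit the two eigenvalue equations for $\psi'$. On the $x$-side, conjugating the differential eigen-operator $L_0$ of $\psi$ (with $L_0\psi=\lambda(n)\psi$) gives $e^{\operatorname{ad}_L}(L_0)\,\psi'=\lambda(n)\psi'$, a differential operator in $x$ with the same eigenvalue. On the $n$-side, the computation above applied to $P=x$ yields $x\psi'=b'(x)\psi'$ with $b'(x)=b\big(e^{-\operatorname{ad}_L}(x)\big)\in\mathcal{B}_2$ a difference operator. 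Together with the hypothesis that $\psi'(\cdot,n)$ is a polynomial of degree $n$, this shows $\psi'$ is an admissible nonsplit bispectral function, so the construction of the paragraph following condition $(**)$ applies verbatim to $\psi'$ and makes $b'$ the well-defined bispectral anti-involution attached to it.

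The step I expect to be most delicate is the commutation used in the second paragraph: one must be certain that $e^{L}$, a priori only a formal exponential of an element of $\mathcal{B}_1$, acts on $\psi$ as an operator in the variable $x$ alone, and therefore commutes on $\mathcal{M}$ with every element of $\mathcal{B}_2$. This is precisely where the hypotheses do the work, namely local nilpotence of $\operatorname{ad}_L$ so that all the series are finite, and the degree-$n$ polynomiality of $e^{L}\psi$ so that $\psi'$ lands in the correct space. Once $e^{L}$ is known to be a bona fide operator in the $x$-variable, the bimodule structure renders the commutation automatic and the rest is formal.
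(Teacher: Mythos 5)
The paper offers no proof of Proposition~\ref{2.1}: it is quoted as a known result from~\cite{BHY}, and the underlying computation only reappears case by case in Lemmas~\ref{ln:new}, \ref{2new-b} and~\ref{lm:new}, where $b'=b\circ\sigma^{-1}$ is verified on generators. Your argument is precisely the intended general proof, executed correctly --- the conjugation identity $Pe^{L}=e^{L}e^{-\operatorname{ad}_L}(P)$ made rigorous by local nilpotency, the defining relation $Q\psi=b(Q)\psi$, the commutation on the bimodule ${\mathcal M}$ of operators in $x$ with operators in $n$, and the remark that degree-$n$ polynomiality of $\psi'$ forces nonsplitness and hence the uniqueness that makes $b'$ well defined --- so it is correct and takes essentially the same route as the source the paper cites.
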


\section{Hermite-like polynomials} \label{section3}

Observe that $\operatorname{ad}_{\partial}$ acts locally nilpotently on the algebra ${\mathcal B}_1$. The same is true when we take instead of~$\partial$ any polynomial $q(\partial)$ without a~free term. From this it follows that for any element~$a\in {\mathcal B}_1$, the series
\begin{gather*}
\sum_{k=0}^{\infty} \frac{\operatorname{ad}^k_{\partial}(a)}{k!}
\end{gather*}
has only f\/initely many terms.
For such a polynomial $q(\partial)$ we def\/ine the automorphism \mbox{$\sigma {:=} e^{\operatorname{ad}_{q(\partial)}}$}.
Its action on $L$ is explicitly given by
\begin{gather*}
\sigma(L)= L + \sum_{k=1}^{\infty} \frac{\operatorname{ad}^k_{q(\partial)}(L)}{k!} =
L + [q(\partial), L] = L + q'(\partial)\partial,
\end{gather*}
since the rest of the terms vanish.

Let us f\/irst compute the action of $\sigma = e^{\operatorname{ad}_{q(\partial)}}$ on the standard basis of ${\mathcal B}_1$. We have
\begin{gather*}
\sigma(\partial) =\partial,\qquad \sigma(x) = x + q'(\partial).
\end{gather*}

Next we def\/ine a new map $b'\colon {\mathcal B}_1\to {\mathcal B}_2$ by $b':=b\circ \sigma^{-1}$. We will introduce the new function~$\psi_1(x,n)$ def\/ined via
\begin{gather*} 
	\psi_1(x,n) := P_n^q(x): = e^{q(\partial)}x^n = x^n + \sum_{m=1}^{\infty}\frac{(q(\partial))^m x^n}{m!}.
\end{gather*}
Due to the fact that $q(\partial)$ reduces the degrees of the polynomials it is clear that the latter series contains only a~f\/inite number of terms. Hence it is a polynomial in $x$. (Here we use that $q$ has no constant term.)
Def\/ine the operator $L_1: = \sigma(L)$. We already saw that
\begin{gather*}
L_1 = x\partial + q'(\partial)\partial.
\end{gather*}

\begin{Lemma}\label{ln:new}
The new bispectral anti-involution $b'$ satisfies:
\begin{gather}\label{1-inv}
b'(\partial) = nT^{-1},\qquad b'(x) = T - q'\big( nT^{-1} \big),\qquad b'(L_1) = n.
\end{gather}
\end{Lemma}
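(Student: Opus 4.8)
The plan is to use the defining relation $b' = b \circ \sigma^{-1}$ together with the explicit action of $\sigma = e^{\operatorname{ad}_{q(\partial)}}$ recorded just above, and to evaluate $b'$ on each of the three elements by first applying $\sigma^{-1} = e^{-\operatorname{ad}_{q(\partial)}}$ and then $b$. Since $b$ is already pinned down as a unital algebra anti-isomorphism with $b(\partial) = nT^{-1}$, $b(x) = T$, $b(L) = n$, the whole task reduces to computing $\sigma^{-1}(\partial)$, $\sigma^{-1}(x)$ and $\sigma^{-1}(L_1)$.

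First I would handle $\partial$. Because $q(\partial)$ is a polynomial in $\partial$ alone, $\operatorname{ad}_{q(\partial)}(\partial) = [q(\partial), \partial] = 0$, so $\sigma^{-1}(\partial) = \partial$ and hence $b'(\partial) = b(\partial) = nT^{-1}$, which is the first relation. At the other end, $L_1 = \sigma(L)$ holds by definition, so $\sigma^{-1}(L_1) = L$ with no computation at all, and thus $b'(L_1) = b(L) = n$, the third relation.

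The only substantive case is $x$. Using $[\partial, x] = 1$ I would compute $\operatorname{ad}_{q(\partial)}(x) = [q(\partial), x] = q'(\partial)$, and then note that $\operatorname{ad}^2_{q(\partial)}(x) = [q(\partial), q'(\partial)] = 0$ since $q(\partial)$ and $q'(\partial)$ are both polynomials in $\partial$ and therefore commute. The $\operatorname{ad}$-series truncates after two terms, giving $\sigma^{-1}(x) = e^{-\operatorname{ad}_{q(\partial)}}(x) = x - q'(\partial)$. Applying $b$ yields $b'(x) = b(x) - b(q'(\partial))$, and since $b(\partial^k) = (b(\partial))^k = (nT^{-1})^k$ — the reversal built into an anti-homomorphism being harmless here, as all factors are the equal element $nT^{-1}$ — and $b$ is unital, one gets $b(q'(\partial)) = q'(nT^{-1})$. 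This produces $b'(x) = T - q'(nT^{-1})$, the second relation.

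I do not expect a genuine obstacle; the substance is entirely in the middle step. The two points to watch are (i) the truncation of the $\operatorname{ad}$-series on $x$, which relies only on $q'(\partial)$ commuting with $q(\partial)$, and (ii) the bookkeeping for $b$ on $q'(\partial)$: because $q$ has no constant term, $q'$ may have one, which $b$ sends to a scalar consistently with being unital, and the anti-isomorphism does not spoil the evaluation $b(q'(\partial)) = q'(nT^{-1})$ since a polynomial in a single generator is unaffected by reversing the order of its (identical) factors.
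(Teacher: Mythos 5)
Your proposal is correct and follows essentially the same route as the paper: apply $b' = b\circ\sigma^{-1}$ to each generator, using $\sigma^{-1}(\partial)=\partial$, $\sigma^{-1}(x)=x-q'(\partial)$ (which the paper had already recorded via $\sigma(x)=x+q'(\partial)$), and $\sigma^{-1}(L_1)=L$. Your extra care about the $\operatorname{ad}$-series truncation and about $b$ acting on the single-generator polynomial $q'(\partial)$ just makes explicit what the paper leaves implicit.
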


\begin{proof} We have $b'(\partial) = b \circ\sigma^{-1} (\partial) = b (\partial) = nT^{-1}$.
In the same way we compute
$b'(x) = b \circ\sigma^{-1} (x) = b (x - q'(\partial)) = T - q'( nT^{-1} )$.
Finally for $L_1$ we f\/ind
$b'(L_1) = b\circ \sigma^{-1}(L_1) = b\circ \sigma^{-1} \circ\sigma(L) = b(L) = n$.
\end{proof}

\begin{Theorem}\label{mult-App}
The polynomials $P_n^q$ have the following properties:
\begin{enumerate}\itemsep=0pt
\item[$(i)$] they are eigenfunctions of the differential operator
\begin{gather}
	L_1 := q'(\partial)\partial + x\partial;
\end{gather}

\item[$(ii)$] they satisfy the recurrence relation
\begin{gather}
	xP^q_n(x) = P^q_{n+1} - q'\big(nT^{-1}\big) P^q_n(x); \label{new-rec}
\end{gather}

\item[$(iii)$] the following differentiation formula $($lowering operator$)$
\begin{gather*}
	\partial P^q_n(x) = nP^q_{n-1} 
\end{gather*}
 holds.
 \end{enumerate}
\end{Theorem}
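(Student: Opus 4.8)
The three assertions are the concrete manifestation of a single fact: by Proposition~\ref{2.1}, the map $b'=b\circ\sigma^{-1}$ is a bispectral anti-involution realised through the function $\psi_1(x,n)=P_n^q(x)=e^{q(\partial)}x^n$. Spelled out, this means that for every operator $P\in{\mathcal B}_1$ one has the intertwining identity
\begin{gather*}
P\,\psi_1(x,n)=b'(P)\,\psi_1(x,n),
\end{gather*}
where the left-hand side is a differential operator acting in the variable~$x$ and the right-hand side is the corresponding difference operator acting in the discrete variable~$n$. My plan is therefore to apply this identity to each of the three generators $L_1$, $x$ and $\partial$, and then to read off the action of the difference operators produced by Lemma~\ref{ln:new} on the sequence $P_n^q$.

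For part~$(i)$ I take $P=L_1$. Lemma~\ref{ln:new} gives $b'(L_1)=n$, and since $n$ acts as multiplication by the scalar~$n$, the intertwining identity yields $L_1P_n^q=nP_n^q$, so the $P_n^q$ are eigenfunctions of $L_1=q'(\partial)\partial+x\partial$ with eigenvalue~$n$. For part~$(iii)$ I take $P=\partial$; from $b'(\partial)=nT^{-1}$ and the conventions $T^{-1}f(n)=f(n-1)$ followed by multiplication by~$n$, I obtain $\partial P_n^q=nT^{-1}P_n^q=nP_{n-1}^q$. For part~$(ii)$ I take $P=x$; Lemma~\ref{ln:new} gives $b'(x)=T-q'(nT^{-1})$, and using $TP_n^q=P_{n+1}^q$ the identity becomes precisely the recurrence $xP_n^q=P_{n+1}^q-q'(nT^{-1})P_n^q$.

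Since Proposition~\ref{2.1} and Lemma~\ref{ln:new} already carry out the structural work, I do not expect a serious computational obstacle; the only point that needs care is the bookkeeping of the module action, namely checking that each shift $T^{\pm1}$ and each factor~$n$ is applied to~$\psi_1$ in the correct order, so that the symbolic identity $P\psi_1=b'(P)\psi_1$ becomes a genuine equality of polynomials. As an independent check I would also establish $(i)$ and $(iii)$ directly by conjugation: because $\partial$ commutes with $e^{q(\partial)}$ one has $\partial P_n^q=e^{q(\partial)}\partial x^n=ne^{q(\partial)}x^{n-1}=nP_{n-1}^q$, while $L_1=\sigma(L)=e^{q(\partial)}Le^{-q(\partial)}$ gives $L_1P_n^q=e^{q(\partial)}Lx^n=nP_n^q$. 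The recurrence $(ii)$ then follows from $e^{-q(\partial)}xe^{q(\partial)}=x-q'(\partial)$ together with the identification $q'(\partial)P_n^q=q'(nT^{-1})P_n^q$ supplied by $b'(\partial)=nT^{-1}$, which is exactly the place where the discrete operator $q'(nT^{-1})$ enters the formula.
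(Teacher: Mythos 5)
Your proof is correct and follows essentially the same route as the paper: the paper's own proof likewise reads the three statements directly off the formulas for $b'(\partial)$, $b'(x)$, $b'(L_1)$ in Lemma~\ref{ln:new}, using the intertwining property $P\psi_1 = b'(P)\psi_1$ guaranteed by Proposition~\ref{2.1}. Your supplementary conjugation check (via $L_1 = e^{q(\partial)}Le^{-q(\partial)}$ and $\sigma^{-1}(x) = x - q'(\partial)$) is sound and makes explicit what the paper leaves implicit, but the core argument is the same.
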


\begin{proof} Indeed, statement (i) that polynomials $P_n(x)$ are eigenfunctions of the operator
\begin{gather*}
L_1 = q'(\partial)\partial + x\partial
\end{gather*}
follows from the computation of the new bispectral involution $b'$ in Lemma~\ref{ln:new}. More exactly, the third equation of~\eqref{1-inv} gives
\begin{gather*}
(q'(\partial)\partial + x\partial)P_n(x) = n P_n(x).
\end{gather*}
To prove recurrence~(ii), observe the second equation in~\eqref{1-inv} provides
\begin{gather*}
x P_n(x) = P_{n+1} - q'\big( nT^{-1} \big)P_n.
\end{gather*}
Also\looseness=1 the f\/irst equation in \eqref{1-inv} gives the dif\/ferentiation formula
$\partial P_n = nP_{n-1}$, which settles~(iii).
\end{proof}

\section{Laguerre-like VOP}\label{section4}

We start with an algebra ${\mathcal B}_1$ spanned by the operators~$x$, $H = x\partial$ and $B = x\partial^2 +\beta \partial$. The commutation relations are
\begin{gather*}
[H, x]= x, \qquad [B, x]= 2H + \beta ,\qquad [H, B]= - B.
\end{gather*}
This shows that our algebra is an enveloping algebra of $\mathfrak{sl}_2$ (not the universal one).

Next we def\/ine the algebra ${\mathcal R}_2$ of dif\/ference operators over ${\mathbb{C}}$, as the
one spanned by~$T$,~$T^{-1}$,~$n$. Let ${\mathcal M}$ be a left module over ${\mathcal B}_1$ and ${\mathcal R}_2$. In our case such a module is a linear space of bivariate functions $f(x,n)$, where~$x$ is a continuous and~$n$ is a~discrete variables. We def\/ine~$K_1$ to be the algebra of polynomials in~$x$ and~$K_2$ to be the algebra of polynomials in~$n$.

Let again $\psi(x,n):= S_n(X):= x^n$. It is obviously nonsplit in the above def\/ined sense~$(**)$. Def\/ine the map $b$ by its action on the
generators:
\begin{gather*}
b(x) = T, \qquad b(H) = n, \qquad b(B)= n(n-1+ \beta)T^{-1}.
\end{gather*}
We def\/ine the algebra ${\mathcal B}_2$ to be the image $b({\mathcal B}_1)$ of ${\mathcal B}_1$ under the map $b$. Obviously the map $\sigma\colon {\mathcal B}_1 \rightarrow {\mathcal B}_2$ is an anti-automorphism.

Setting $L:= H$ we get
\begin{gather*}
	H S_n(x) = n S_n(x), \qquad 
	x S_n(x) = S_{n+1}(x), \qquad 
	B S_n(x) = n(n-1+ \beta) S_{n-1}(x).
\end{gather*}

Using the above notation and following the scheme of the previous section, we now set up an appropriate bispectral problem.
We will use the algebras ${\mathcal B}_1$ and ${\mathcal B}_2$ introduced there.

Consider a polynomial $q(X)$. For simplicity we assume that
$q(X)$ has no constant term. Let us def\/ine an automorphism $\sigma\colon {\mathcal B}_1 \rightarrow {\mathcal B}_1$
\begin{gather*}
\sigma(A) = e^{\operatorname{ad}_{q(B)}}(A),\qquad A \in {\mathcal B}_1.
\end{gather*}
In the next lemma we compute the action of $\sigma$.

\begin{Lemma}\label{gen}
$\sigma$ acts on the generators as follows
\begin{gather*}
\sigma(x) = x + (2H + \beta)q'(B) + q''(B)B + q'(B)^2B, \\
\sigma(H) = H + q'(B)B, \qquad
\sigma(B) = B.
\end{gather*}
\end{Lemma}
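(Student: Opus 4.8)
The plan is to expand the automorphism on each generator as the (a priori infinite) series $\sigma(A)=e^{\operatorname{ad}_{q(B)}}(A)=\sum_{k\ge 0}\frac{1}{k!}\operatorname{ad}_{q(B)}^k(A)$ and to verify that in every case only finitely many terms survive. The case $\sigma(B)=B$ is immediate, since $q(B)$ is a polynomial in $B$ and hence $\operatorname{ad}_{q(B)}(B)=[q(B),B]=0$. For $\sigma(H)$ I would first record, by an easy induction from $[B,H]=-[H,B]=B$, the identity $[B^k,H]=kB^k$; summing against the coefficients of $q$ then gives $\operatorname{ad}_{q(B)}(H)=[q(B),H]=q'(B)B$. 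As $q'(B)B$ is again a polynomial in $B$, it commutes with $q(B)$, so $\operatorname{ad}_{q(B)}^2(H)=0$ and the series collapses to $\sigma(H)=H+q'(B)B$.

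The substantive computation is $\sigma(x)$, and here the obstacle is the non-commutativity hidden in $[q(B),x]$: because $H$ does not commute with $B$, one cannot merely ``differentiate'' $q(B)$. The clean way around this is to start from the three base commutators $\operatorname{ad}_B(x)=[B,x]=2H+\beta$, $\operatorname{ad}_B^2(x)=[B,2H+\beta]=2[B,H]=2B$, and $\operatorname{ad}_B^3(x)=0$, so that $\operatorname{ad}_B$ is locally nilpotent on $x$ with nilpotency index three. I would then invoke the operator identity $B^k a=\sum_{j\ge 0}\binom{k}{j}\operatorname{ad}_B^j(a)\,B^{k-j}$, valid because left multiplication, right multiplication and $\operatorname{ad}_B$ mutually commute. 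For $a=x$ this truncates at $j=2$ and yields $[B^k,x]=k(2H+\beta)B^{k-1}+k(k-1)B^{k-1}$. Multiplying by the coefficients of $q$ and resumming, the two resulting sums reproduce $(2H+\beta)q'(B)$ and $q''(B)B$ respectively, so $\operatorname{ad}_{q(B)}(x)=(2H+\beta)q'(B)+q''(B)B$.

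It remains to iterate once more. Since $q''(B)B$ commutes with $q(B)$ it drops out, while for the remaining piece one uses that $q'(B)$ commutes with $q(B)$ together with $[q(B),2H+\beta]=2[q(B),H]=2q'(B)B$ to obtain $\operatorname{ad}_{q(B)}^2(x)=2q'(B)^2B$; a final application of $\operatorname{ad}_{q(B)}$ annihilates this polynomial in $B$, so the series terminates at second order. Collecting the three surviving terms,
\begin{gather*}
\sigma(x)=x+\operatorname{ad}_{q(B)}(x)+\frac{1}{2}\operatorname{ad}_{q(B)}^2(x)=x+(2H+\beta)q'(B)+q''(B)B+q'(B)^2B,
\end{gather*}
where the factor $\frac{1}{2}$ exactly cancels the $2$ in $2q'(B)^2B$, which is the asserted formula. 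The main point to get right is thus the single identity for $[B^k,x]$, after which everything is bookkeeping controlled by the nilpotency $\operatorname{ad}_B^3(x)=0$.
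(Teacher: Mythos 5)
Your proof is correct, and its overall skeleton is the same as the paper's: expand $e^{\operatorname{ad}_{q(B)}}$ on each generator, establish $\operatorname{ad}_{q(B)}(x)=(2H+\beta)q'(B)+q''(B)B$, then show $\operatorname{ad}^2_{q(B)}(x)=2q'(B)^2B$ and $\operatorname{ad}^3_{q(B)}(x)=0$, and sum the three surviving terms. The one genuine difference is the device used for the key commutator $[B^k,x]$. The paper expands it as the telescoping sum $2\sum_{j=0}^{k-1}B^jHB^{k-1-j}+\beta B^{k-1}$ and normal-orders each summand via the relation $B^jH=(H+j)B^j$; you instead invoke the binomial identity $B^k a=\sum_{j\ge 0}\binom{k}{j}\operatorname{ad}_B^j(a)B^{k-j}$ (valid because right multiplication by $B$ commutes with $\operatorname{ad}_B$) and truncate it using the nilpotency $\operatorname{ad}_B^3(x)=0$, which you verify directly from the structure relations. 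Both routes produce the same intermediate formula $[B^k,x]=k(2H+\beta)B^{k-1}+k(k-1)B^{k-1}$, after which the resummation, the second iterate, and the termination argument are identical. Your variant is somewhat more systematic and reusable: the same identity, truncated at $\operatorname{ad}_B^4(x)=0$, would organize equally well the messier computation for the third-order operator in Section~\ref{section5} (Lemma~\ref{gen2}), where the paper must re-derive ad hoc normal-ordering relations such as $B^jW=[W+jR+3j(j-1)]B^j$. The paper's route, in exchange, stays at the level of explicit structure constants, which makes the term-by-term bookkeeping easier to check by hand.
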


\begin{proof}In order to derive the f\/irst formula we need to compute the commutator
\begin{gather*}
[B^m, x] = 2\sum_{j=0}^{m-1}B^jH B^{m-1-j} +\beta B^{m-1}.
\end{gather*}
Notice that $B^jH = (H + j)B^j$. From this we f\/ind
\begin{gather*}
[B^m, x] = 2mHB^{m-1} + (m(m-1) + \beta m)B^{m-1},
\end{gather*}
which shows that
\begin{gather*}
[q(B), x] = (2H + \beta)q'(B) + q''(B)B.
\end{gather*}
In order to compute $\operatorname{ad}^2_q(x)$ we also need $[q(B), H] = q'(B) B$,
which yields
\begin{gather*}
[q(B), Hq'(B)] = q'^2(B)B.
\end{gather*}
This gives that
\begin{gather*}
\operatorname{ad}^2_q(x) = 2q'^2(B)B, \qquad \operatorname{ad}^3_q(x) = 0.
\end{gather*}
The two other formulas are obvious.
\end{proof}

The lemma shows that $\operatorname{ad}_{q(B)}$
acts locally nilpotently on the algebra ${\mathcal B}_1$, i.e., when applied to any element $A\in {\mathcal B}_1$, the series
\begin{gather*}
\sum_{k=0}^{\infty} \frac{\operatorname{ad}_{q(B)}^k(A)}{k!}
\end{gather*}
has only f\/initely many terms.

Let us def\/ine a new map $b'\colon {\mathcal B}_1\to {\mathcal B}_2$ given by $b':=b\circ \sigma^{-1}$ and
a~new function $\psi_1(x,n)$ def\/ined via
\begin{gather*} 
	\psi_1(x,n) := P_n^q(x): = e^{q(B)}x^n = x^n + \sum_{k=1}^{\infty}\frac{(q(B))^k x^n}{k!}.
\end{gather*}
It is clear that the latter series contains only a f\/inite number of terms as the operator~$q(B)$ reduces the degrees of the polynomials. (Here we use that~$q$ has no constant term.) Hence it is a polynomial in~$x$.

Below and further in the paper we also use without mentioning that
\begin{gather*}
\sigma^{-1} = \sum_{j=0}^{\infty}\frac{(-\operatorname{ad})^j_{q(B)}}{j!}.
\end{gather*}

Def\/ine the operator $L_1: = \sigma(L)$. One can easily see that
$L_1 = x\partial + q'(B)B$.

\begin{Lemma}\label{2new-b}
The new bispectral involution $b'$ on ${\mathcal B}'$ satisfies:
\begin{gather*}
		b'(B) = n(n-1 + \beta)T^{-1}, \\
		b'(x) = T - q'(b(B))(2n+\beta) - q''(b(B))b(B) + q'^2(b(B))b(B),		 \\
		b'(L_1) = n. 
	\end{gather*}
\end{Lemma}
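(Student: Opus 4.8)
The plan is to compute $b' = b \circ \sigma^{-1}$ directly on each generator, exactly mirroring the proof of Lemma~\ref{ln:new} in the Hermite case. The entire content is that $b'$ applied to a generator equals $b$ applied to the image of that generator under $\sigma^{-1}$, and Lemma~\ref{gen} already gives us the images under $\sigma$; the only genuine work is inverting $\sigma$ on the relevant generators and then pushing the result through the algebra homomorphism $b$.

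First I would handle $B$, which is the easy case. Since Lemma~\ref{gen} gives $\sigma(B) = B$, we have $\sigma^{-1}(B) = B$ as well, so $b'(B) = b(\sigma^{-1}(B)) = b(B) = n(n-1+\beta)T^{-1}$, which is the first asserted identity. Likewise the third identity is immediate: by definition $L_1 = \sigma(L)$, so $b'(L_1) = b(\sigma^{-1}(\sigma(L))) = b(L) = b(H) = n$, using the given value $b(H)=n$.

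The middle identity for $b'(x)$ is the one requiring care. From Lemma~\ref{gen} we read off
\begin{gather*}
\sigma(x) = x + (2H+\beta)q'(B) + q''(B)B + q'(B)^2 B.
\end{gather*}
I would apply $\sigma^{-1}$ to solve for $\sigma^{-1}(x)$; the efficient route is to note that the correction terms $(2H+\beta)q'(B)+q''(B)B+q'(B)^2B$ all lie in the span of $H$ and powers of $B$, on which $\sigma^{-1}$ acts through $\sigma^{-1}(B)=B$ and $\sigma^{-1}(H)=H - q'(B)B$ (the latter inverting $\sigma(H)=H+q'(B)B$). A cleaner alternative, and the one I expect the author intends, is to observe that since $\operatorname{ad}_{q(B)}^3(x)=0$, applying $\sigma^{-1}=e^{-\operatorname{ad}_{q(B)}}$ to $x$ gives $x - \operatorname{ad}_{q(B)}(x) + \tfrac12\operatorname{ad}_{q(B)}^2(x)$, and then one applies $b$ term by term. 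Using $b(H)=n$, $b(B)=n(n-1+\beta)T^{-1}$, and the fact that $b$ is an anti-homomorphism so $b(q'(B)) = q'(b(B))$ etc., the term $(2H+\beta)q'(B)$ maps to $q'(b(B))(2n+\beta)$ and so on, reproducing the stated formula.

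The main obstacle is purely bookkeeping of signs and of operator ordering under the anti-isomorphism: because $b$ reverses products, I must be attentive that $b$ of a product like $(2H+\beta)q'(B)$ reorders the factors, and that the signs from $\sigma^{-1} = e^{-\operatorname{ad}_{q(B)}}$ combine correctly with the commutator formulas $\operatorname{ad}_{q(B)}(x)$ and $\operatorname{ad}_{q(B)}^2(x)=2q'(B)^2B$ established in Lemma~\ref{gen}. Since the generators appearing are all functions of $B$ together with $H$, and $b$ sends these to functions of $n$ and $T^{-1}$ that commute appropriately, the reordering is harmless in the end, but verifying that the coefficient of each monomial in $n$ and $T^{-1}$ matches the claimed expression is the one place where an error could creep in.
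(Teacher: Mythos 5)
Your proposal is correct and follows essentially the same route as the paper: the paper also gets $b'(B)$ and $b'(L_1)$ immediately from $\sigma^{-1}(B)=B$ and $L_1=\sigma(L)$, and for $b'(x)$ it uses exactly your ``cleaner alternative,'' namely $\sigma^{-1}(x)=x-\operatorname{ad}_{q(B)}(x)+\tfrac12\operatorname{ad}_{q(B)}^2(x) = x-(2H+\beta)q'(B)-q''(B)B+q'^2(B)B$, followed by applying the anti-homomorphism $b$ term by term. Your attention to the factor reversal under $b$ (giving $q'(b(B))(2n+\beta)$ rather than the opposite order) is exactly the right point of care, since $n$ and $b(B)$ do not commute.
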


\begin{proof} We have $b'(B) = b \circ\sigma^{-1} (B) = b (B) = n(n-1 +\beta)T^{-1}$.
In the same way we compute
\begin{gather*}
	b'(x) = b \circ\sigma^{-1} (x) = b \big(x - (2H+\beta)q'(B) - q''(B)B + q'^2(B)B\big)\\
\hphantom{b'(x)}{} = T - q'(b(B))(2n +\beta) - q''(b(B))b(B) +q'^2(b(B))b(B).
\end{gather*}
Finally for $L_1$ we f\/ind
$b'(L_1) = b\circ \sigma^{-1}(L_1) = b\circ \sigma^{-1} \circ\sigma(L) = b(L) = n$.
\end{proof}

\begin{Theorem}
The polynomials $P_n^q$ have the following properties:
\begin{enumerate}\itemsep=0pt
\item[$(i)$] they are eigenfunctions of the differential operator
\begin{gather*}
	L_1 := q'(B)B + x\partial; 
\end{gather*}

\item[$(ii)$] they satisfy the recurrence relation
\begin{gather*}
xP^q_n(x) = P^q_{n+1} - q'\big(n(n-1+ \beta)T^{-1}\big)(2n + \beta) P^q_n(x) \\
\hphantom{xP^q_n(x) =}{} +\!\big\{ {-}2q''\big(n(n-1+ \beta)T^{-1}\big) + q'^2\big(n(n-1+ \beta)T^{-1}\big)\big\} n(n-1+ \beta) P_{n-1}^q(x); 
\end{gather*}

\item[$(iii)$] the following differentiation formula $($lowering operator$)$
\begin{gather*}
	B P^q_n(x) = n(n-1 +\beta)P^q_{n-1} 
\end{gather*}
holds.
\end{enumerate}
\end{Theorem}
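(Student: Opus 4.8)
My plan is to deduce all three statements from the single master identity $P\,\psi_1 = b'(P)\,\psi_1$, valid for every $P\in{\mathcal B}_1$, exactly as in the proof of Theorem~\ref{mult-App}. This identity is furnished by Proposition~\ref{2.1} for the automorphism $\sigma=e^{\operatorname{ad}_{q(B)}}$ and the new function $\psi_1(x,n)=P_n^q(x)=e^{q(B)}x^n$: since $\sigma$ is conjugation by $e^{q(B)}$ one has $P\,e^{q(B)}=e^{q(B)}\sigma^{-1}(P)$, and as $e^{q(B)}$ acts only in $x$ while every element of ${\mathcal B}_2$ acts only in $n$ the two commute, so combining this with the original relation $\sigma^{-1}(P)\psi=b(\sigma^{-1}(P))\psi$ gives $P\psi_1=b'(P)\psi_1$. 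Thus the entire proof reduces to substituting the three elements $L_1$, $B$ and $x$ into this identity and reading off $b'$ from Lemma~\ref{2new-b}.

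For (i) I take $P=L_1=\sigma(L)=q'(B)B+x\partial$; the last line of Lemma~\ref{2new-b} gives $b'(L_1)=n$, so the master identity reads $L_1P_n^q=nP_n^q$, which is the eigenvalue equation. For (iii) I take $P=B$; since $\sigma(B)=B$ we get $b'(B)=b(B)=n(n-1+\beta)T^{-1}$, and the identity becomes $BP_n^q=n(n-1+\beta)T^{-1}P_n^q=n(n-1+\beta)P_{n-1}^q$, the lowering formula. Neither of these requires any computation beyond quoting the relevant value of $b'$.

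The real content is (ii), which comes from $P=x$. Here the identity gives $xP_n^q=b'(x)P_n^q$ with $b'(x)=T-q'(b(B))(2n+\beta)-q''(b(B))b(B)+q'^2(b(B))b(B)$ from Lemma~\ref{2new-b}. The summand $TP_n^q$ yields $P_{n+1}^q$, and each of the remaining summands is a difference operator in $n$ applied to $P_n^q$; the task is to expand the polynomials $q'$, $q''$, $q'^2$ in the operator $b(B)=n(n-1+\beta)T^{-1}$ and to normal-order, moving every shift $T^{-1}$ past the explicit multiplications by $(2n+\beta)$ and by $n(n-1+\beta)$. I expect this normal-ordering to be the only delicate step: because $b(B)$ does not commute with multiplication by functions of $n$, collecting the scalar coefficients of $P_n^q$ and $P_{n-1}^q$ takes care, and in particular I would double-check the numerical coefficient multiplying $q''(b(B))$ directly against the expression for $b'(x)$ in Lemma~\ref{2new-b} rather than trusting a shortcut.
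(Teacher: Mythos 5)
Your proposal is correct and is essentially the paper's own proof: the paper disposes of this theorem with the single remark that all three statements follow from the computation of $b'$ in Lemma~\ref{2new-b}, the argument being the same as for the Hermite-like case (Theorem~\ref{mult-App}), i.e., substituting $L_1$, $B$ and $x$ into the master identity $P\psi_1=b'(P)\psi_1$. Your conjugation argument $P\,e^{q(B)}=e^{q(B)}\sigma^{-1}(P)$, combined with the commutation of operators in $x$ with operators in $n$, is precisely what Proposition~\ref{2.1} supplies, so you have only made explicit what the paper leaves implicit.

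One substantive remark: your decision not to trust the printed coefficient of $q''$ and to re-derive it from Lemma~\ref{2new-b} is vindicated. The lemma gives $b'(x)=T-q'(b(B))(2n+\beta)-q''(b(B))b(B)+q'^2(b(B))b(B)$, so applying this to $P^q_n$ and using $b(B)P^q_n=n(n-1+\beta)P^q_{n-1}$ yields the $P^q_{n-1}$-coefficient $\big\{-q''(b(B))+q'^2(b(B))\big\}n(n-1+\beta)$; the factor $-2q''$ printed in part $(ii)$ of the statement is a misprint for $-q''$. Example~\ref{ex2-2} (with $q(B)=B^2/2$, so $q''\equiv 1$) confirms this: there the coefficient of $P_{n-1}$ is $-n(n-1+\beta)(2n-1+\beta)$, which is what $-q''$ produces, whereas $-2q''$ would give $-n(n-1+\beta)(2n+\beta)$. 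So carrying out the normal-ordering you describe proves the (corrected) recurrence, and your proof is complete.
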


\begin{proof} All the statements follow easily from the computation of the new bispectral involution~$b'$ in Lemma~\ref{2new-b}. Essentially the proof is the same as the proof for Hermite-like polynomials.
\end{proof}

\section{Automorphisms generated by third-order operators}\label{section5}

We start with an algebra ${\mathcal B}_1$ spanned by the operators $x$, $H = x\partial$ and $B = x^2\partial^3 + \alpha x \partial^2 + \beta \partial$. The commutation relations are
\begin{gather*}
[H, x]= x, \qquad [B, x]= W:=3H^2 + 2 \alpha H+ \beta,\qquad [H, B]= - B.
\end{gather*}

We again use the algebra ${\mathcal R}_2$ of dif\/ference operators over ${\mathbb{C}}$, spanned by $T$, $T^{-1}$, $n$ and the bimodule~${\mathcal M}$ of bivariate functions~$f(x,n)$, where~$x$ is a continuous and~$n$ is a discrete variables.

Let $S_n(x):=\psi(x,n):= x^n$. It is obviously nonsplit in the above def\/ined sense~$(**)$. Def\/ine the map $b\colon {\mathcal B}_1 \rightarrow {\mathcal R}_2$ by its action on the
generators:
\begin{gather*}
b(x) = T, \qquad b(H) = n, \qquad
b(B)= n[(n-1)(n-2)+ \alpha(n-1) + \beta]T^{-1}.
\end{gather*}
The algebra ${\mathcal B}_2$ is the image of ${\mathcal B}_1$ under the map $b$.

Set $L:= H$. Then
\begin{gather}
	H S_n(x) = n S_n(x), \label{diff-eq2}\\
	x S_n(x) = S_{n+1}(x), \label{rec-rel2} \\
	B S_n(x) = \left[\binom{n}{3} + \alpha \binom{n}{2} + \beta n\right] S_{n-1}(x). \nonumber 
\end{gather}

We see that \eqref{diff-eq2} and \eqref{rec-rel2} are providing an instance of a~dif\/ferential-dif\/ference bispectral pair of operators. Namely, we get that~\eqref{diff-eq2} guarantees that~$\psi(x,n)$ is an eigenfunction of the dif\/ferential operator~$L$, and~\eqref{rec-rel2} shows that the same function is an eigenfunction of the dif\/ference operator~$T$.

Using the above notation and following the scheme of Section~\ref{section3}, we now set up an appropriate bispectral problem.

Consider a polynomial $q(X)$ and for simplicity we assume that $q(X)$ has no constant term. Def\/ine an automorphism of the algebra ${\mathcal B}_1$:
\begin{gather*}
\sigma(A) = e^{\operatorname{ad}_{q(B)}}(A),\qquad A \in {\mathcal B}_1.
\end{gather*}
Let us compute the action of~$\sigma$. Put $R = 3(2H +1)+ 2\alpha$.

\begin{Lemma}\label{gen2}
$\sigma$ acts on the generators as follows
\begin{gather*}
\sigma(x) = x + Wq'(B) +Rq''(B)B/2 + q'''(B)B^2 \\
\hphantom{\sigma(x) =}{} + \big\{3q''(B)B +(R +6)q'(B)/2+ q'^2(B)B \big\}Bq'(B),\\
\sigma(H) = H + q'(B)B, \qquad \sigma(B) = B.
\end{gather*}
\end{Lemma}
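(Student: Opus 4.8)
The plan is to compute $\sigma(A)=e^{\operatorname{ad}_{q(B)}}(A)$ on each generator by first establishing the basic commutators with $B$ and then summing the exponential series, exactly mirroring the two earlier lemmas. The two easy formulas come first: since $\operatorname{ad}_{q(B)}(B)=0$ trivially, we get $\sigma(B)=B$; and since $[H,B]=-B$ gives $[B,H]=B$, I expect $\operatorname{ad}_{q(B)}(H)=[q(B),H]=q'(B)B$ (using the same ``$\operatorname{ad}$ of a function of $B$ picks up a derivative'' mechanism as in Lemma~\ref{gen}), with the second-order term vanishing because $[q(B),q'(B)B]$ involves only $B$'s and $H$-free pieces that commute appropriately — I would check $\operatorname{ad}^2_{q(B)}(H)=0$ explicitly to confirm the series truncates, yielding $\sigma(H)=H+q'(B)B$.

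The real work is $\sigma(x)$. First I would compute $[B^m,x]$ by the same telescoping trick used before: writing $[B^m,x]=\sum_{j=0}^{m-1}B^j[B,x]B^{m-1-j}$ with $[B,x]=W=3H^2+2\alpha H+\beta$, then commuting each $W$ past the powers of $B$. The key auxiliary identity is a commutation rule expressing $B^jWB^{-j}$ (or equivalently $WB^{m-1-j}$ reordered) in terms of $H$ and $B$; here the analogue of $B^jH=(H+j)B^j$ must be upgraded to handle $H^2$, which is why the factor $R=3(2H+1)+2\alpha$ appears — it is essentially the ``discrete derivative'' of $W$ as the index shifts. Summing over $j$ should collapse $[q(B),x]$ into the stated first line $Wq'(B)+Rq''(B)B/2+q'''(B)B^2$, the three terms tracking the three powers of $H$ in $W$ after the shift corrections.

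Then I would iterate: compute $\operatorname{ad}^2_{q(B)}(x)=[q(B),\,\operatorname{ad}_{q(B)}(x)]$. Since $\operatorname{ad}_{q(B)}(x)$ is a polynomial in $B$ and $H$ (through $W$ and $R$), applying $[q(B),\,\cdot\,]$ again only acts nontrivially on the $H$-dependent pieces, each time producing a further factor of $q'(B)B$ via the $\operatorname{ad}_{q(B)}(H)=q'(B)B$ rule. I expect this to generate the correction term $\{3q''(B)B+(R+6)q'(B)/2+q'^2(B)B\}Bq'(B)$, and then $\operatorname{ad}^3_{q(B)}(x)=0$ so the series terminates. The main obstacle is purely bookkeeping: keeping the noncommutative ordering straight when $H$ (and $H^2$) is pushed past powers of $B$, and correctly assembling the $1/k!$ factors from $e^{\operatorname{ad}}$ with the combinatorial sums from the telescoping — getting the coefficients $R/2$, $(R+6)/2$ right is where a sign or factor-of-two slip is most likely. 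Once $[B^m,x]$ and the second-order term are verified, the remaining assembly into the displayed $\sigma(x)$ is routine.
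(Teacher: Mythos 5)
Your plan follows the paper's route for most of the way: the telescoping computation of $[B^m,x]$ with $[B,x]=W$, the shift rule that pushes $W$ past powers of $B$ (with $R$ playing the role of the discrete derivative of $W$), the resulting formula $[q(B),x]=Wq'(B)+Rq''(B)B/2+q'''(B)B^2$, and the treatment of $\sigma(B)$ and $\sigma(H)$ (indeed $\operatorname{ad}^2_{q(B)}(H)=[q(B),q'(B)B]=0$ since both factors are polynomials in $B$). Up to that point you are in step with the paper's proof.

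However, your termination claim is wrong, and it is exactly the point where this lemma differs from the Laguerre case. You assert that $\operatorname{ad}^2_{q(B)}(x)$ produces the whole bracket $\{3q''(B)B+(R+6)q'(B)/2+q'^2(B)B\}Bq'(B)$ and that $\operatorname{ad}^3_{q(B)}(x)=0$. This is impossible by a degree count: each application of $\operatorname{ad}_{q(B)}$ annihilates the pure-$B$ factors and acts only on the $H$-dependent factors, raising the total degree in the derivatives $q',q'',q'''$ by exactly one. Hence $\operatorname{ad}^2_{q(B)}(x)$ contains only terms quadratic in derivatives of $q$, while the summand $q'^2(B)B\cdot Bq'(B)=q'^3(B)B^2$ in the stated formula is cubic, so it cannot come from the second-order term. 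In the paper's proof one finds $\operatorname{ad}^2_{q(B)}(x)=\{6q''(B)B+(R+6)q'(B)\}Bq'(B)$, which still involves $R$, i.e., still depends on $H$; consequently one more bracket survives, $\operatorname{ad}^3_{q(B)}(x)=[q(B),R]\,q'^2(B)B=6q'^3(B)B^2\neq 0$, and only $\operatorname{ad}^4_{q(B)}(x)=0$. The cubic term $q'^2(B)B$ inside the braces is precisely $\operatorname{ad}^3_{q(B)}(x)/3!$. The structural reason is that here $[B,x]=W$ is quadratic in $H$, unlike the Laguerre case of Section~\ref{section4} where $[B,x]=2H+\beta$ is linear in $H$ and the series for $\sigma(x)$ does stop at the second-order term; the quadratic dependence raises the nilpotency order by one. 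As written, your plan would either fail to reproduce the stated formula or force you to notice the missing third-order contribution mid-computation.
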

\begin{proof}
To prove the f\/irst identity we will need to compute the commutator $[B^m, x]$.
First we have
\begin{gather*}
[B^m, x] = \sum_{j=0}^{m-1}B^j W B^{m-1 -j}.
\end{gather*}
Notice that $B W= WB + RB$. 
We also need $B^jR = (R +6j)B^j$,
from which we f\/ind
\begin{gather*}
B^jW = W B^j + \sum_{k=0}^{j-1}B^kRB^{j-k} = \left(W + \sum_{k=0}^{j-1}(R+6k)\right)B^j = [W +jR +3j(j-1)]B^j.
\end{gather*}
Hence
\begin{gather*}
[B^m, x] = \sum_{j=0}^{m-1} [W +jR +3j(j-1)] B^{m-1}.
\end{gather*}
Then using the formulas for sums of powers we transform this expression into
\begin{gather*}
[B^m, x] = [mW +m(m-1)R /2 +m(m-1)(m-2)] B^{m-1},
\end{gather*}
which shows that
\begin{gather*}
[q(B), x] = Wq'(B) +Rq''(B)B/2 + q'''(B)B^2 .
\end{gather*}
Using the commutation relation $[B,H] = B$ we compute
\begin{gather*}
[q(B), H] = q'(B)B
\end{gather*}
and
\begin{gather*}
[q(B), W] = 3/2 q''(B)B^2 +Rq'(B)B.
\end{gather*}
This gives that
\begin{gather*}
\operatorname{ad}^2_q(x) = \{6q''(B)B +(R +6)q'(B)\}Bq'(B).
\end{gather*}
Finally we f\/ind
\begin{gather*}
\operatorname{ad}^3_q(x) = 6q'^3(B)B^2, \qquad \operatorname{ad}^4_q(x) = 0.
\end{gather*}
This gives the f\/irst identity. The other two are obvious.
\end{proof}

The lemma shows that $\operatorname{ad}_{q(B)}$
acts locally nilpotently on the algebra ${\mathcal B}$, i.e., when applied to any element $A\in {\mathcal B}$, the series
\begin{gather*}
\sum_{k=0}^{\infty} \frac{\operatorname{ad}^k_{q(B)}(A)}{k!}
\end{gather*}
has only f\/inite number of terms.

Let us def\/ine a new map $b'\colon {\mathcal B}_1\to {\mathcal B}_2$ given by $b':=b\circ \sigma^{-1}$ and the
new function $\psi_1(x,n)$ def\/ined via
\begin{gather*}
\psi_1(x,n) := P_n^q(x): = e^{q(B)}x^n = x^n + \sum_{k=1}^{\infty}\frac{q^k(B) x^n}{k!}.
\end{gather*}
It is clear that the latter series contains only a f\/inite number of terms as the operator~$q(B)$ reduces the degrees of the polynomials. Hence $\psi_1(x,n)$ is a polynomial in~$x$. (Here we use that~$q$ has no constant term.)
Def\/ine the operator $L_1: = \sigma(L)$. One can easily see that
\begin{gather*}
L_1 = x\partial + q'(B)B.
\end{gather*}
In order to simplify the notations let us put
\begin{gather*}
Q_0(B) = -q'''(B)B^2 + \big\{2q''(B)B +3q'(B)- q'^2(B)B \big\}Bq'(B),\\
Q_1(B) = \frac{\big(q'^2(B) - q''(B)\big)B}{2}.
\end{gather*}
\begin{Lemma}\label{lm:new}
	The new bispectral anti-involution $b'$ satisfies:
\begin{gather*}
b'(B) = n[(n-1)(n-2)+ \alpha(n-1) + \beta] T^{-1}, \\
b'(x) = T - q'(b(B)) \big(3n^2 +2\alpha n +\beta\big) + Q_1(b(B))(6n +3 + \alpha) + Q_0(b(B)),\\
b'(L_1) = n.
	\end{gather*}
\end{Lemma}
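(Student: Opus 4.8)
The plan is to mirror the computations of Lemmas~\ref{ln:new} and~\ref{2new-b}, since $b' = b\circ\sigma^{-1}$ and $\sigma^{-1} = e^{-\operatorname{ad}_{q(B)}}$. Two of the three identities are immediate. Because Lemma~\ref{gen2} gives $\sigma(B) = B$, we have $\sigma^{-1}(B) = B$ and hence $b'(B) = b(B) = n[(n-1)(n-2)+\alpha(n-1)+\beta]T^{-1}$. Likewise, since $L_1 = \sigma(L)$ with $L = H$, we get $b'(L_1) = b\circ\sigma^{-1}\circ\sigma(L) = b(H) = n$. So the only real content is the computation of $b'(x)$.

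For $b'(x)$ I would first write $\sigma^{-1}(x) = e^{-\operatorname{ad}_{q(B)}}(x) = x - \operatorname{ad}_q(x) + \frac{1}{2}\operatorname{ad}^2_q(x) - \frac{1}{6}\operatorname{ad}^3_q(x)$, the series terminating because $\operatorname{ad}^4_q(x) = 0$ by Lemma~\ref{gen2}. I would then substitute the three commutators already recorded there, namely $\operatorname{ad}_q(x) = Wq'(B) + Rq''(B)B/2 + q'''(B)B^2$, the displayed expression for $\operatorname{ad}^2_q(x)$, and $\operatorname{ad}^3_q(x) = 6q'^3(B)B^2$. Applying $b$ --- an \emph{anti}-homomorphism, so that it reverses the order of every product --- and using $b(B^k) = b(B)^k$, $b(q^{(j)}(B)) = q^{(j)}(b(B))$, together with $b(W) = 3n^2 + 2\alpha n + \beta$ and $b(R) = 6n + 3 + 2\alpha$, I would reassemble the image term by term. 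The terms carrying the factor $W$ produce $-q'(b(B))(3n^2+2\alpha n+\beta)$; the terms carrying $R$ collapse into $Q_1(b(B))$ times the appropriate linear-in-$n$ factor; and the purely $B$-dependent remainder assembles into $Q_0(b(B))$. Matching these against the definitions of $Q_0$ and $Q_1$ finishes the computation.

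The main obstacle is the non-commutative bookkeeping between the $H$-dependent factors $W$, $R$ and the $B$-dependent factors. Because $[H,B] = -B$, one cannot freely move $W$ or $R$ across powers of $B$; instead one must use the reordering rules already isolated in the proof of Lemma~\ref{gen2}, namely $B^jR = (R+6j)B^j$ and $B^jW = [W + jR + 3j(j-1)]B^j$, to bring each monomial into a canonical form before $b$ is applied. Keeping these shifts consistent, and remembering that the anti-homomorphism $b$ inverts the order of the resulting products, is exactly where errors are easiest to make; this is the step I would carry out most carefully, checking the final grouping against the structurally identical but simpler outcome of Lemma~\ref{2new-b} as a sanity test.
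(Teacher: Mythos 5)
Your proposal is correct and takes the paper's own route: the paper's proof of Lemma~\ref{lm:new} is literally ``Repeats the proof of Lemma~\ref{2new-b} and will be omitted'', and your three steps ($b'(B)=b(B)$ since $\sigma(B)=B$; $b'(L_1)=b\circ\sigma^{-1}\circ\sigma(L)=b(L)=n$; and $b'(x)$ obtained by expanding the terminating series $\sigma^{-1}(x)=e^{-\operatorname{ad}_{q(B)}}(x)$ with the commutators of Lemma~\ref{gen2} and then applying the order-reversing anti-homomorphism $b$ term by term) are exactly the computations of that earlier proof carried over to the third-order operator~$B$. There is no gap in method; if anything you supply more detail than the paper, including the non-commutative reordering rules $B^jR=(R+6j)B^j$ and $B^jW=[W+jR+3j(j-1)]B^j$ that the omitted computation would indeed require.
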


\begin{proof} Repeats the proof of Lemma~\ref{2new-b} and will be omitted.
\end{proof}

We formulate the main results about the polynomial system $P_n^q(x)$ in the next theorem.

\begin{Theorem}
	The polynomials $P_n^q(x)$ have the following properties:
\begin{enumerate}\itemsep=0pt	
\item[$(i)$] they are eigenfunctions of the differential operator
	\begin{gather*}
		L_1 := q'(B)B + x\partial; 
	\end{gather*}
	
\item[$(ii)$] they satisfy the recursion relation
\begin{gather}
xP^q_n(x) = P^q_{n+1} + \big\{{-} 3q'(b(B))\big(3n^2 +2\alpha n +\beta\big)
+ Q_1(b(B))(6n +3 + \alpha) \nonumber\\
\hphantom{xP^q_n(x) =}{} +Q_0(b(B))\big\}P^q_n(x); \label{new-rec3}
	\end{gather}
	
\item[$(iii)$] the following differentiation formula $($lowering operator$)$	
	\begin{gather*}
		B P^q_n(x) = n[(n-1)(n-2)+ \alpha(n-1) + \beta] P^q_{n-1} 
	\end{gather*}
	holds.
\end{enumerate}
\end{Theorem}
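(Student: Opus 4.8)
The plan is to follow verbatim the scheme already used for the Hermite-like and Laguerre-like families: each of the three assertions is an immediate reading of one of the three identities in Lemma~\ref{lm:new}, interpreted through the defining property of the bispectral anti-involution. By Proposition~\ref{2.1}, the map $b'=b\circ\sigma^{-1}$ is a bispectral anti-involution for the function $\psi_1(x,n)=P_n^q(x)=e^{q(B)}x^n$, so that
\begin{gather*}
P\,\psi_1(x,n)=b'(P)\,\psi_1(x,n)\qquad\text{for every }P\in{\mathcal B}_1,
\end{gather*}
where $P$ acts as a differential operator in $x$ and $b'(P)$ as a difference operator in $n$.

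For part~(i) I would apply this identity with $P=L_1$. The third line of Lemma~\ref{lm:new}, $b'(L_1)=n$, gives at once $L_1P_n^q=nP_n^q$, so $P_n^q$ is an eigenfunction of $L_1=q'(B)B+x\partial$ with eigenvalue~$n$. For part~(iii) I would take $P=B$; since $b'(B)=n[(n-1)(n-2)+\alpha(n-1)+\beta]T^{-1}$ and $T^{-1}\psi_1=P_{n-1}^q$, the lowering formula $BP_n^q=n[(n-1)(n-2)+\alpha(n-1)+\beta]P_{n-1}^q$ follows directly.

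For part~(ii) I would take $P=x$ and invoke the second line of Lemma~\ref{lm:new}. The summand $T$ in $b'(x)$ produces the leading term $T\psi_1=P_{n+1}^q$, while $q'(b(B))$, $Q_0(b(B))$ and $Q_1(b(B))$ are polynomials in $b(B)=n[(n-1)(n-2)+\alpha(n-1)+\beta]T^{-1}$, hence genuine difference operators in $n$ alone. Writing $c(n)=n[(n-1)(n-2)+\alpha(n-1)+\beta]$, a power $(b(B))^k$ acts on $P_n^q$ by $c(n)c(n-1)\cdots c(n-k+1)\,P_{n-k}^q$; expanding and collecting then produces the stated recurrence. The point worth checking is that this is a genuine finite-term recurrence with $x$-independent coefficients: no $x$ can appear because every summand of $b'(x)$ lies in ${\mathcal R}_2$, and since $q$ (hence $q'$, $Q_0$, $Q_1$) has no constant term the operator $b(B)$ strictly lowers the index, so only finitely many $P_{n-j}^q$ survive.

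The only real obstacle is computational bookkeeping: to match the displayed coefficients one must expand the composite operator $b'(x)$---in particular the hidden powers of $b(B)=c(n)T^{-1}$ buried inside $q'$, $Q_0$ and $Q_1$---and keep track of how each factor $T^{-1}$ shifts the index $n$. Because Proposition~\ref{2.1} already guarantees the operator identity $x\,\psi_1=b'(x)\,\psi_1$, no further analytic input is needed, and the calculation runs exactly parallel to the proof of the Section~\ref{section4} theorem, which is precisely why the argument can be compressed to that reference.
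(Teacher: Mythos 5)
Your argument is correct and coincides with the paper's own proof, which is simply the remark that the proof ``repeats that of Theorem~\ref{mult-App}'': each of (i)--(iii) is read off from the corresponding line of Lemma~\ref{lm:new} via the intertwining property $P\,\psi_1 = b'(P)\,\psi_1$ of the bispectral anti-involution $b'=b\circ\sigma^{-1}$ supplied by Proposition~\ref{2.1}, exactly as you do. Note only that a literal application of Lemma~\ref{lm:new} yields the coefficient $-q'(b(B))\big(3n^2+2\alpha n+\beta\big)$ in the recurrence, whereas the theorem as printed carries an extra factor~$3$; this discrepancy is internal to the paper (one of the two displays has a misprint) and is not a defect of your proof.
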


Proof repeats that of Theorem~\ref{mult-App}.

\section{Examples}\label{section6}

In the examples below we construct the dif\/ferential operator $L_1$ that has the vector orthogonal sets $P_n^q(x) = e^{q(B)}x^n$ as eigenfunctions. In some cases we also write explicitly the f\/inite-term recursion relation.

\begin{Example}\label{Seg} Set $q(\partial) = -1/k \partial^k$. Then we obtain
\begin{gather*}
L_1 = x\partial - \partial^k.
\end{gather*}
Observe that, for $k=2$, the operator $ -L_1$ is the standard Hermite operator. For $k> 2$, $-L_1$~appeared earlier in~\cite{ST}. The recurrence relation for $P_n(x)$ is given by~\eqref{new-rec}.
In the present case it reads
\begin{gather*}
xP_n(x) = P_{n+1}(x) + \frac{n!}{(k-1)!} P_{n-k+1}(x),
\end{gather*}
which agrees with the results in~\cite{ST}. In general all the examples of Section~\ref{section3} have been studied thoroughly starting with the classical paper~\cite{App}. In the context of vector orthogonal polynomials the results have been found in~\cite{Dou}.
\end{Example}

\begin{Example}\label{ex2-1}
Here we take $B= x\partial^2 +\beta \partial$. Let us take the polynomial $q(B) = B$.
In this case we have
\begin{gather*}
L_1 = H + q'(B)B = x\partial^2 +\beta \partial + x\partial,
\end{gather*}
which up to change of the variables is the Laguerre operator.
\end{Example}

\begin{Example}\label{ex2-2}
The simplest new example is given by the polynomial
$q(B) = B^2/2$, $B = x\partial^2 +\beta \partial$.
In this case we have
\begin{gather*}
	L_1 = H + q'(B)B = \big(x\partial^2 +\beta \partial\big)^2 + x\partial
	= x^2\partial^4 + 2x(1 + \beta)\partial^3 + \big(\beta + \beta^2\big)\partial^2 + x\partial.
\end{gather*}
The 5-terms recurrence relation reads
\begin{gather*}
xP_n(x) = P_{n+1} - n(n-1 +\beta)(2n - 1 +\beta)P_{n-1} + \frac{n!}{3!}\binom{n-1 +\beta}{3} P_{n-3}.
\end{gather*}
Here are the f\/irst few polynomials
\begin{gather*}
P_0(x) = 1, \qquad P_1(x) = x, \qquad P_2(x) = x^2 +\beta(1 +\beta),\\
 P_3(x) = x^3 + 3 (1 +\beta)(2 +\beta)x, \qquad \ldots.
\end{gather*}
\end{Example}

\begin{Example}\label{ex3-1}
Let us take the polynomial
$q(B) = B $, $B = x^2\partial^3 + \alpha x\partial^2 + \beta \partial $. Then
\begin{gather*}
L_1 = H + q'(B)B = x^2\partial^3 + \alpha x\partial^2 + \beta \partial + x\partial.
\end{gather*}
The corresponding polynomials satisfy the 4-term recurrence relation
\begin{gather*}
	xP^q_n(x) = P^q_{n+1} +(3-2\alpha)P^q_n(x) - 3n[(n-1)(n-2 + \alpha) + \beta] P^q_{n-1}\\
\hphantom{xP^q_n(x) =}{}	- n(n-1)[(n-1)(n-2 +\alpha)+ \beta][(n-2)(n-3 + \alpha) + \beta] P^q_{n-2}.
\end{gather*}

This example has been produced in~\cite{BCD2} within the context of vector orthogonal polynomials which are generalized hypergeometric functions.
\end{Example}

\begin{Example}\label{ex3-2}
Let us take the polynomial
$q(B) = B^2/2 $, $B = x^2\partial^3$. Then
\begin{gather*}
L_1 = H + q'(B)B = \big(x^2\partial^3\big)^2 + x\partial = x^4 \partial^6 + 6 x^3 \partial^5 + 6 x^2 \partial^4 + x\partial.
\end{gather*}
This example seems to be new as most of the examples in Sections~\ref{section4} and~\ref{section5}.

The polynomials satisfy a 7-term recurrence relation which we skip as it is not much simpler than the corresponding general formula~\eqref{new-rec3}. Here are the f\/irst few polynomials:
\begin{gather*}
P_0(x) = 1,\qquad P_1(x) = x, \qquad P_2(x) = x^2, \\
P_3(x) = x^3 + 3\cdot 2^3x, \qquad P_4(x) = x^4 + 4\cdot 3^3\cdot 2^2x^2, \qquad \ldots.
\end{gather*}
\end{Example}

\subsection*{Acknowledgements}

The author is deeply grateful to Boris Shapiro for showing and discussing some examples of systems of polynomials studied here and in particular the examples from~\cite{ST}. Without this probably the project would have never be even started. Also Plamen Iliev made valuable comments on some results generalizing Laguerre polynomials, which helped me to place my work among the rest of the research. Milen Yakimov pointed out several errors. Many thanks go to the referees and the editor who pointed out a number of incorrect formulas and misprints and thus helped me to improve considerably the initial text.
The author is grateful to the Mathematics Department of Stockholm University for the hospitality in April 2015.
Last but not least I am extremely grateful to Professors T.~Tanev and K.~Kostadinov, and Mrs.\ Z.~Karova from the Bulgarian Ministry of Education and Science and Professor P.~Dolashka, who helped me in the dif\/f\/icult situation when I~was sacked by Sof\/ia university in violations of the Bulgarian laws\footnote{See EMS Newsletter, p.~71: \url{http://www.ems-ph.org/journals/newsletter/pdf/2015-12-98.pdf}.}.

\pdfbookmark[1]{References}{ref}
\LastPageEnding

\end{document}